\def\Pois{\mathrm{Pois}}
\def\Lie{\mathrm{Lie}}
\def\adm{\mathrm{adm}}
\def\Anti{\mathrm{Anti}}
\def\As {\mathop {\fam0 As }\nolimits}
\def\GD {\mathop {\fam0 GD }\nolimits}
\def\Com{\mathrm{Com}}
\def\Nov{\mathrm{Nov}}
\newtheorem{theorem}{Theorem}
\newtheorem{lemma}{Lemma}
\newtheorem{corollary}{Corollary}
\theoremstyle{definition}
\newtheorem{definition}{Definition}
\newtheorem{example}{Example}
\DeclareSymbolFont{cyrletters}{OT2}{wncyr}{m}{n}
\DeclareMathSymbol{\Sha}{\mathalpha}{cyrletters}{"58}
\newcommand{\bcong}{\mathbin{\rotatebox[origin=c]{90}{$\cong$}}}
\title[Free product of operads and free basis of Lie-admissible operad]{Free 
product of operads and free basis of Lie-admissible operad}
\author[B.K. Sartayev]{B.K. Sartayev}
\address{Sobolev Institute of Mathematics, Novosibirsk, Russia and Astana IT university, Nur-Sultan, Kazakstan}
\email{baurjai@gmail.com}
\begin{document}
\begin{abstract}
In this paper, we introduce the definition of a free product of operads 
following the definition of a free product of algebras. We present a method for 
finding the basis and dimension of the free product of operads. We prove that 
the Lie-admissible operad is isomorphic to the free product of Lie and 
commutative (but non-associative) operads.
\end{abstract}

\keywords{Operad, Free product of operads, Shuffle operad, Lie-admissible 
algebra}
\subjclass[2010]{
17D25, 
18D50, 
08B20 
}

\maketitle

\section{Introduction}
A wide range of algebraic systems in nonassociative algebra is presented by 
linear spaces with two operations satisfying certain axioms.
So are Poisson algebras, Novikov--Poisson algebras \cite{Xu}, Gelfand--Dorfman 
(bi)algebras \cite{KSO2019}, \cite{Xu2000}.
Each operad governing either of the corresponding varieties of algebras is a 
quotient of the operad
$\mathcal O$ generated by two different operations
without identities that contain both these products.
A natural construction that describes such an operad from a more general point 
of view
is the free product of operads.

Obviously, if operads $\mathcal O_i$, $i=1,2$, have
the graded spaces of generators $V_i$
and defining relations $\Sigma_i \subset \bigcup\limits_{n\geq 1} \mathcal 
F(V_i)(n)$
then $\mathcal O_1\ast \mathcal O_2$
($\ast$ symbol of free product of operads)
is the operad generated by $V_1\oplus V_2$ with defining
relations $\Sigma_1\cup \Sigma _2$.
For example, the operad governing the variety of $2$-$\As$ algebras 
\cite{Loday-Ronco}
is a free product of two operads $\As$ governing the class of associative 
algebras.
We may also consider the class of $2$-$\Com$-$\As$ algebras as an analogue of 
$2$-$\As$.
There is a natural connection between $\mathcal O_1\ast \mathcal O_2$ and 
series-parallel networks.

The notion of a Lie-admissible algebra was introduced by A.~Albert.
By the definition, an algebra $\mathcal{L}$ with a single product $(x,y)\mapsto 
xy$
is Lie-admissible if and only if
the commutator algebra $\mathcal{L}^{(-)}$ with the product $[x,y]=xy-yx$ is a 
Lie algebra.
Lie admissible algebra and operad was studied in \cite{Remm2002} and 
\cite{Goze-Remm2004}. 
It would be interesting to find other free algebras $\mathcal{A}$ as a Lie 
admissible that is isomorphic to a free product of $\mathcal{A}^{(-)}$ and 
$\mathcal{A}^{(+)}$.

The theory of Gr\"obner bases is  a useful tool for solving the word problem and 
finding normal forms
in commutative algebras in the most efficient algorithmic way.
For noncommutative and nonassociative algebras the Gr\"obner--Shirshov bases 
theory
addresses the same problems (see \cite{BokChen2014_BMS}).
The theory of Gr\"obner bases for operads was established in 
\cite{Bremner-Dotsenko} and \cite{Dotsenko-Khoroshkin}. To prove the isomorphism 
of operads, we use the theory developed by them.

\section{Free product of operads}

Let $S_n$ stand for the symmetric group of the set $\{1,\dots , n\}$.
A symmetric operad $\mathcal P$ is a symmetric collection of a
$S_n$-modules $\mathcal P(n)$, $n\geq 1$,
equipped with linear composition maps
\[
\gamma^m _{n_1,\dots,n_m} :\mathcal P(m)\otimes \mathcal P(n_1)\otimes\ldots 
\otimes \mathcal P(n_m)\rightarrow \mathcal P(n_1+\ldots +n_m)
\]
which satisfy the associativity condition.
The space $\mathcal {P}(1)$ contains an element
$\textrm{id}$ that acts as an identity relative to the compositions.
Finally, the compositions are equivariant with respect to the symmetric group 
actions.

An operad ideal in an operad $\mathcal P$ is a collection of $S_n$-invariant 
subspaces
$I(n)\subseteq \mathcal P(n)$, $n\geq 1$, such that
\[
\begin{gathered}
\gamma^m_{n_1,\dots,n_m} (I(m), \mathcal P(n_1), \ldots ,\mathcal P(n_m)) 
\subseteq I(n_1+\dots +n_m), \\
\gamma^m_{n_1,\dots,n_m} (\mathcal P(m), \mathcal P(n_1),\ldots, I(n_k),\ldots 
,\mathcal P(n_m)) \subseteq I(n_1+\dots +n_m).
\end{gathered}
\]

A morphism of operads $\psi: \mathcal O\to \mathcal P$
is a collection of $S_n$-linear maps
$\psi(n): \mathcal O(n)\to \mathcal P(n)$, $n\geq 1$,
preserving the compositions and identity.

\begin{definition}
Let $\mathcal O_1$ and $\mathcal O_2$ be two operads.
Then the {\em free product} of two operads $O_1$  and $O_2$
is an operad $O=O_1\ast O_2$ satisfying the following conditions.
\begin{enumerate}
    \item
     There exist morphisms $\pi_i: \mathcal O_i \to O$
     for $i=1,2$;

     \item For every operad $\mathcal P$ and for every pair of
     morphisms $\psi_i: \mathcal O_i\to \mathcal P$
     there exists a unique morphism
     $\psi : \mathcal O\to \mathcal P$
     such that $\psi_i = \psi \pi_i$ for $i=1,2$.
\end{enumerate}
\end{definition}

For every graded space
$V = \bigoplus_{n\ge 1} V(n)$ there exists a uniquely defined
free operad $\mathcal F(V)$ generated by~$V$.
An operad ideal of $\mathcal F(V)$ may be presented as a minimal one that 
contains a given series of elements from
$\bigcup\limits_{n\geq 1} \mathcal F(V)(n)$.
Therefore, an operad may be defined by generators and relations.

For example, the operad $\Com$-$\As$
defining the variety of commutative-associative algebras
is generated by $V_1=V_1(2)$, where $\dim V_1(2)=1$,
this is a symmetric $S_2$-module.
The free operad  $\mathcal F(V_1)$ is exactly the operad
of commutative algebras.
The defining relations of $\Com$-$\As$ consist of associative identity:
\[
\gamma^2_{2,1} (\mu,\mu,1) + \gamma^2_{1,2} (\mu, 1,\mu).
\]
It is well-known that $\dim \Com$-$\As(n) = 1$.

The operad $\Lie $
of the variety of Lie algebras
is generated by $V_2=V_2(2)$, where $\dim V_2(2)=1$,
this is a skew-symmetric $S_2$-module.
The free operad $\mathcal F(V_2)$ is exactly the operad
of anti-commutative algebras. The set of defining relations
of $\Lie $ consists of the Jacobi identity:
\[
\gamma^2_{2,1} (\mu,\mu,1) + \gamma^2_{2,1} (\mu,\mu,1)^{(123)} + \gamma^2_{2,1} 
(\mu,\mu,1)^{(132)}.
\]
It is well-known that $\dim \Lie(n) = (n-1)!$.

The graphical presentation of the free product of two operads
naturally comes from the theory of Gr\"obner bases for operads.
Let $X(n)$ and $Y(n)$, $n\ge 1$, be linear bases of $\mathcal O_1(n)$ and 
$\mathcal O_2(n)$, respectively. Then each of the operads $\mathcal O_1$ or 
$\mathcal O_2$ may be considered as a shuffle operad
(see \cite{Bremner-Dotsenko} for the definition) generated by
$X = \bigcup\limits_{n\ge 1}X(n)$ or $Y= \bigcup\limits_{n\ge 1}Y(n)$,
respectively.
The sets of defining relations $\Sigma_1$ and $\Sigma _2$ then consist of all 
possible compositions of elements in $X$ and $Y$, respectively.
(It is similar to the observation that in every reasonable class of algebras
the multiplication table of an algebra is a Gr\"obner--Shirshov basis.)
It remains to describe those elements of the free operad generated by the linear 
span of $X\cup Y$ reduced with respect to $\Sigma_1\cup\Sigma_2$.

\begin{example}\label{exmp:Free-ns}
For every $n\ge 1$, consider the set $B(n)$
of planar rooted trees with $n$ leaves enumerated by integers $1,\dots, n$,
with internal vertices labelled by $\circ $ and $\bullet $
satisfying the following conditions:
\begin{enumerate}
 \item Each internal vertex has at least two leaves;
 \item There are no edges connecting internal vertices with similar labels.
\end{enumerate}
Let us define a composition of such trees. In order to compose a tree $t\in 
B(n)$
with a sequence of trees $t_i\in B(m_i)$, $i=1,\dots, n$, we first perform the 
ordinary grafting with re-numeration of leaves and then suppress those edges (if 
any) that connect vertices with similar edges. The resulting tree
$\gamma^n_{m_1,\dots ,m_n}(t,t_1,\dots , t_n)$ belongs to $B(m_1+\dots +m_n)$. 
For example,

\begin{picture}(30,80)
\put(-13,60){$\gamma^3_{1,2,2}$}
\put(10,58){$\big($}
\put(54,58){$,$}
\put(32,68){$\bullet$}
\put(35,70){\line(-1,-1){15}}
\put(35,70){\line(1,-1){15}}
\put(35,70){\line(0,-1){15}}
\put(32,45){$1$}
\put(48,45){$3$}
\put(17,45){$2$}

\put(60,58){$\textrm{id}$}
\put(70,58){$,$}
\put(92,68){$\circ$}
\put(95,70){\line(-1,-1){15}}
\put(95,70){\line(1,-1){15}}
\put(107,45){$1$}
\put(77,45){$2$}

\put(115,58){$,$}
\put(137,68){$\bullet$}
\put(140,70){\line(-1,-1){15}}
\put(140,70){\line(1,-1){15}}
\put(152,45){$2$}
\put(120,45){$1$}
\put(160,58){$\big)$}

\put(175,48){$=$}
\put(227,68){$\bullet$}
\put(230,70){\line(-1,-1){20}}
\put(230,70){\line(0,-1){20}}
\put(230,70){\line(1,-1){30}}
\put(207,47){$\circ$}
\put(210,50){\line(-1,-1){15}}
\put(210,50){\line(1,-1){15}}
\put(257,37){$\bullet$}
\put(260,40){\line(-1,-1){15}}
\put(260,40){\line(1,-1){15}}

\put(190,25){$3$}
\put(221,25){$2$}
\put(228,40){$1$}
\put(242,15){$4$}
\put(272,15){$5$}
\put(278,48){$=$}

\put(357,68){$\bullet$}
\put(360,70){\line(-1,-1){20}}
\put(360,70){\line(-2,-1){40}}
\put(360,70){\line(1,-1){20}}
\put(360,70){\line(2,-1){40}}
\put(317,47){$\circ$}
\put(320,50){\line(-1,-1){15}}
\put(320,50){\line(1,-1){15}}

\put(300,26){$3$}
\put(333,26){$2$}
\put(337,41){$1$}
\put(377,41){$4$}
\put(397,41){$5$}
\end{picture}
\vspace*{-\baselineskip}

The family of spaces $B(n)$ together with the composition defined above
forms an operad~$\mathcal B$.

This is an easy exercise in the Gr\"obner bases technique for operads
to show $\mathcal B$ is isomorphic to $\As*\As$ (or two-associative algebra in 
\cite{Loday-Ronco}).
\end{example}

Suppose $\mathcal O_1$ and $\mathcal O_2$ are two binary symmetric  operads.
Let us construct an operad $\mathcal T = \mathcal T(\mathcal O_1,\mathcal O_2)$ 
as follows.
First, construct the family of linear spaces
$\mathcal T(n)$, $n\ge 1$, by induction. Set $\dim \mathcal T(1)=1$, so $\mathcal T(1)$ is 
the linear span of the identity~$\mathrm{id}=\mathrm{id}_T$.
For $n=2$, set $\mathcal T(2)=\mathcal T(2)^\bullet\oplus \mathcal T(2)^\circ$,
where $\mathcal T(2)^\bullet = \mathcal O_1(2)$,
$\mathcal T(2)^\circ = \mathcal O_2(2)$.

Suppose $n>2$. Consider all partitions $\lambda = (n_1,\dots, n_m)$,
$n_1+\dots +n_m=n$, where $n_1\ge \dots \ge n_m\ge 1$.
Assume each $\mathcal T(n_i)$ is already defined,
and for $n_i\ge 2$ it is presented in a form
\[
\mathcal T(n_i) = \mathcal T(n_i)^\bullet\oplus \mathcal T(n_i)^\circ.
\]
Then define
\[
\begin{gathered}
\mathcal T_\lambda (n)^\bullet
=\mathcal O_1(m)\otimes \mathcal T(n_1)^\circ \otimes
\dots \otimes \mathcal T(n_m)^\circ , \\
\mathcal T_\lambda (n)^\circ
=\mathcal O_2(m)\otimes \mathcal T(n_1)^\bullet \otimes
\dots \otimes \mathcal T(n_m)^\bullet .
\end{gathered}
\]
Denote by $S(m,\lambda )$ the set of all $\sigma \in S_m$
such that $n_{\sigma(i)}=n_i$ for all $i=1,\dots, m$.
There is a natural embedding
\[
S(\lambda ):=S(m,\lambda)\times S_{n_1}\times \dots \times S_{n_m} \to S_n
\]
given by the composition of permutations. The subgroup $S(\lambda )$ acts on
$\mathcal T_\lambda (n)^\bullet $ and
$\mathcal T_\lambda (n)^\circ $ in the obvious way.
Finally,
\[
\begin{gathered}
\mathcal T (n)^\bullet =
\bigoplus\limits_{\lambda \in P(n)} \mathrm{Ind}_{S(\lambda )}^{S_n}\mathcal 
T_\lambda (n)^\bullet , \quad
\mathcal T (n)^\circ =
\bigoplus\limits_{\lambda \in P(n)} \mathrm{Ind}_{S(\lambda )}^{S_n}\mathcal 
T_\lambda (n)^\circ , \\
\mathcal T(n) = \mathcal T (n)^\bullet \oplus \mathcal T (n)^\circ .
\end{gathered}
\]

In order to define a composition rule on $\mathcal T$, note that the elements
of $\mathcal T (n)$ are in one-to-one correspondence with formal linear
combinations of planar rooted trees from Example~\ref{exmp:Free-ns}
whose internal vertices have additional labels:
$\bullet $-vertices (resp.,  $\circ$-vertices) with $m$ leaves are marked by 
elements from
$\mathcal O_1(m)$ (resp., $\mathcal O_2(m)$).
The composition of such trees is a straightforward generalization of that from
Example~\ref{exmp:Free-ns} assuming that the labels of gluing vertices are 
composed accordingly. For example,


\begin{picture}(30,80)
\put(17,60){$\gamma^3_{1,2,2}$}
\put(40,58){$\big($}
\put(88,54){$\mu$}
\put(103,58){$,$}
\put(67,68){$\bullet$}
\put(70,70){\line(-1,-1){15}}
\put(70,70){\line(1,-1){15}}
\put(52,45){$1$}
\put(82,51){$\circ$}
\put(85,55){\line(-1,-1){15}}
\put(85,55){\line(1,-1){15}}
\put(97,30){$3$}
\put(67,30){$2$}

\put(110,58){$\textrm{id}$}
\put(120,58){$,$}
\put(152,70){$\nu$}
\put(145,68){$\circ$}
\put(148,70){\line(-1,-1){15}}
\put(148,70){\line(1,-1){15}}
\put(160,45){$1$}
\put(130,45){$2$}

\put(170,58){$,$}
\put(192,68){$\bullet$}
\put(195,70){\line(-1,-1){15}}
\put(195,70){\line(1,-1){15}}
\put(207,45){$2$}
\put(175,45){$1$}
\put(215,58){$\big)$}

\put(230,48){$=$}
\put(267,68){$\bullet$}
\put(270,70){\line(-1,-1){20}}
\put(270,70){\line(1,-1){20}}
\put(287,47){$\circ$}
\put(290,50){\line(-1,-1){15}}
\put(290,50){\line(0,-1){15}}
\put(290,50){\line(1,-1){15}}
\put(302,33){$\bullet$}
\put(305,35){\line(-1,-1){15}}
\put(305,35){\line(1,-1){15}}

\put(295,50){$\gamma_{2,1}^2(\mu,\nu,\textrm{id})$}
\put(245,40){$1$}
\put(272,27){$3$}
\put(287,27){$2$}
\put(287,10){$4$}
\put(317,10){$5$}
\end{picture}
\vspace*{-\baselineskip}

\begin{theorem}\label{th:FreeProdOperad}
The operad $\mathcal T=\mathcal T(\mathcal O_1,\mathcal O_2)$ constructed above 
is isomorphic
to $\mathcal O_1*\mathcal O_2$.
\end{theorem}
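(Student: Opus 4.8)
The plan is to run the Composition--Diamond machinery for shuffle operads. First I would present $\mathcal O_1*\mathcal O_2$ as a shuffle operad on a convenient (infinite) generating set: fix linear bases $X_i=\bigcup_{n\ge 1}X_i(n)$ of $\mathcal O_i$, regard each $\mathcal O_i$ as a shuffle operad generated by $X_i$, and let $\Sigma_i$ be the full multiplication table, i.e.\ all elements $\gamma^{\mathcal F}(a;\mathrm{id},\dots,b,\dots,\mathrm{id})-\overline{\gamma^{\mathcal O_i}(a;\dots,b,\dots)}$ with $a,b\in X_i$, where $\overline{(\,\cdot\,)}$ denotes re-expansion in the basis $X_i$. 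By the operadic version of the observation recalled above --- a multiplication table is a Gr\"obner--Shirshov basis --- each $\Sigma_i$ is a Gr\"obner--Shirshov basis of $\mathcal O_i$: its overlaps are the three-vertex shuffle monomials with all labels in $X_i$, and they reduce to zero by the associativity and equivariance of composition in $\mathcal O_i$. Hence $\mathcal O_1*\mathcal O_2$ is the shuffle operad generated by $X_1\sqcup X_2$ with relations $\Sigma_1\cup\Sigma_2$.

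Next I would equip the free shuffle operad $\mathcal F(X_1\sqcup X_2)$ with a monomial order whose primary statistic is the number of internal vertices, refined by one of the standard admissible orders. The leading monomial of a relation in $\Sigma_i$ is then the two-vertex shuffle tree in which a vertex labelled from $X_i$ is grafted into a slot of another vertex labelled from $X_i$; in the pictures of Example~\ref{exmp:Free-ns} this is an internal edge joining two $\bullet$-vertices when $i=1$ and two $\circ$-vertices when $i=2$. Since relations of $\Sigma_1$ involve only $\bullet$-labels and relations of $\Sigma_2$ only $\circ$-labels, no common multiple of a relation from $\Sigma_1$ and one from $\Sigma_2$ is an essential overlap --- only disjoint occurrences arise, which resolve trivially --- so $\Sigma_1\cup\Sigma_2$ is again a Gr\"obner--Shirshov basis. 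By the Composition--Diamond lemma for shuffle operads, a linear basis of $\mathcal O_1*\mathcal O_2$ is formed by the normal monomials: shuffle trees whose internal vertices are decorated by elements of $X_1$ or $X_2$ according to type, and in which no internal edge joins two vertices of the same type.

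It then remains to match this basis with the collection $\mathcal T(n)$ and to match the composition maps. Unwinding the recursive definition of $\mathcal T$, a normal monomial with a $\bullet$-root of arity $m$ is exactly the datum of an element of $\mathcal O_1(m)$ together with an ordered $m$-tuple of $\circ$-rooted (or trivial, one-leaf) normal monomials whose arities, sorted, form a partition $\lambda$ of $n$ with $m$ parts; the passage from the shuffle picture to the underlying $S_n$-module is precisely the operation $\bigoplus_{\lambda}\mathrm{Ind}_{S(\lambda)}^{S_n}(-)$ applied to $\mathcal O_1(m)\otimes\mathcal T(n_1)^\circ\otimes\cdots\otimes\mathcal T(n_m)^\circ$. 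Thus $\mathcal T(n)^\bullet$ is, as an $S_n$-module, the span of the $\bullet$-rooted normal monomials of arity $n$, and likewise for $\mathcal T(n)^\circ$, so $\mathcal T(n)\cong(\mathcal O_1*\mathcal O_2)(n)$. Finally, the composition on $\mathcal T$ --- graft, then contract each newly created same-type edge by composing its two labels inside the relevant $\mathcal O_i$ --- is literally the reduction to normal form modulo $\Sigma_1\cup\Sigma_2$ of the grafted shuffle tree, so the two operad structures coincide; the morphisms $\pi_i$ of the universal property are the corolla embeddings sending a basis element of $\mathcal O_i(m)$ to the one-vertex tree it decorates, and one may alternatively verify directly from the normal-form basis that $\mathcal T$ together with these $\pi_i$ satisfies the defining property of the free product.

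I expect the main obstacle to be the combinatorial bookkeeping in the Composition--Diamond step: checking carefully that the overlaps of $\Sigma_1\cup\Sigma_2$ are exhausted by the intrinsic overlaps of $\Sigma_1$ and of $\Sigma_2$ (each of which closes up by the associativity of the corresponding $\mathcal O_i$) together with inessential disjoint ones, and then identifying the resulting normal-form basis \emph{with its $S_n$-action} with the induced-module construction of $\mathcal T(n)$, rather than merely reconciling dimensions.
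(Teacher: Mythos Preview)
Your proposal is correct and follows essentially the same route as the paper: present $\mathcal O_1*\mathcal O_2$ by the union of the two multiplication tables $\Sigma_1\cup\Sigma_2$, observe that the leading terms of $\Sigma_1$ and $\Sigma_2$ involve disjoint label sets so no new overlaps appear, and identify the resulting normal shuffle trees (no internal edge joining same-type vertices) with the recursive construction of $\mathcal T(n)$. The paper compresses all of this into two sentences, relying on the setup in the paragraph preceding the theorem; what you have written is a careful unpacking of exactly that sketch, including the points (the $S_n$-module identification via induction from $S(\lambda)$, and the matching of compositions with reduction to normal form) that the paper leaves to the reader.
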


\begin{proof}
The trees representing basic elements of $\mathcal T(n)$
are exactly the reduced trees of the free operad generated by
$$
\bigoplus\limits_{n\ge 1} (O_1(n) \oplus  O_2(n))
$$
relative to the defining relations representing all possible compositions
on $\mathcal O_1$ and on $\mathcal O_2$. Neither of the compositions contains 
vertices
from both $\mathcal O_1$ and $\mathcal O_2$.
\end{proof}

Let us calculate $\dim \mathcal T(n)$ according to the construction from
 Theorem~\ref{th:FreeProdOperad}.
Denote
$\dim \mathcal O_1(m)=x_m$,
$\dim \mathcal O_2(k)=y_k$,
$\dim \mathcal{T}^\bullet (n) = d_n^\bullet$,
$\dim \mathcal{T}^\circ (n) = d_n^\circ $.
Then $\dim \mathcal T(n) = d_n^\bullet + d_n^\circ$ for
$n\ge 2$,
and
\[
\begin{gathered}
d_n^\bullet = \sum\limits_{\lambda \in P(n)}
 \dfrac{n!}{n_1!\cdots n_m! |S(m,\lambda )|}
  x_m d_{n_1}^\circ \dots d_{n_m}^\circ , \\
d_n^\circ = \sum\limits_{\lambda \in P(n)}
 \dfrac{n!}{n_1!\cdots n_m! |S(m,\lambda )|}
  y_m d_{n_1}^\bullet \dots d_{n_m}^\bullet. \\
\end{gathered}
\]
Assuming $d_2^\bullet  = x_2$, $d_2^\circ=y_2$,
we may calculate all dimensions in terms of $x_i$, $y_i$ by induction.

\begin{corollary}\label{Macmahon}
The dimensions $d^\bullet_n$ for $n=3,4,5$
are given by
\begin{align*}
 d^\bullet_3 ={} &x_3+3x_2y_2, \\
 d^\bullet_4={} & x_4+6x_3y_2+3x_2y_2^2+4x_2y_3+12x_2^2y_2,\\
d_5^\bullet = {}&
x_5 + 10x_4y_2 + 5x_2y_4 +15x_3y_2^2 + 30x_2^2y_3 + 50x_2x_3y_2 +10 x_2y_2y_3 \\
 &{} +90x_2^2y_2^2 +15x_2^3y_2 +10x_3y_3.
\end{align*}
In order to get $d^\circ_n$ one needs to interchange
$x$ and~$y$.
\end{corollary}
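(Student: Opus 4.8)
The plan is to prove the corollary by a direct, finite computation from the recursion displayed immediately after Theorem~\ref{th:FreeProdOperad}, proceeding by induction on $n$. The base of the induction records $d^\bullet_1=d^\circ_1=1$ (the identity, equivalently a bare leaf in the tree picture) together with $d^\bullet_2=x_2$, $d^\circ_2=y_2$; one also notes that only partitions $\lambda=(n_1,\dots,n_m)$ of $n$ with $m\ge 2$ contribute, since a single internal vertex already realizes all of $\mathcal T(n)^\bullet$ (resp.\ $\mathcal T(n)^\circ$), so the partition $(n)$ may be disregarded. Since the two recursions for $d^\bullet_n$ and $d^\circ_n$ are interchanged by swapping $\mathcal O_1\leftrightarrow\mathcal O_2$, it suffices to establish the formulas for $d^\bullet_n$; the ones for $d^\circ_n$ then follow by interchanging $x\leftrightarrow y$.

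First I would make explicit, for a partition $\lambda$ of $n$ into parts $n_1\ge\dots\ge n_m\ge 1$, the multiplicity $|S(m,\lambda)|=\prod_{j\ge 1}a_j!$ where $a_j=\#\{i:n_i=j\}$, so that the coefficient $\tfrac{n!}{n_1!\cdots n_m!\,|S(m,\lambda)|}$ becomes a concrete integer for each $\lambda$. For $n=3$ one runs over $(2,1)$ and $(1,1,1)$, with coefficients $3$ and $1$ and terms $x_2d^\circ_2d^\circ_1=x_2y_2$ and $x_3(d^\circ_1)^3=x_3$, giving $d^\bullet_3=x_3+3x_2y_2$. For $n=4$ one runs over $(3,1),(2,2),(2,1,1),(1,1,1,1)$, substitutes the already-computed $d^\circ_3=y_3+3x_2y_2$, $d^\circ_2=y_2$, $d^\circ_1=1$ into $x_md^\circ_{n_1}\cdots d^\circ_{n_m}$, and collects. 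For $n=5$ the partitions are $(4,1),(3,2),(3,1,1),(2,2,1),(2,1,1,1),(1,1,1,1,1)$, with coefficients $5,10,10,15,10,1$ respectively; here one substitutes $d^\circ_4$ (obtained from the $n=4$ formula by interchanging $x$ and $y$), $d^\circ_3$, $d^\circ_2$, $d^\circ_1$ and again collects.

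The only genuinely error-prone step --- and hence the main obstacle, such as it is --- is the final bookkeeping for $n=5$, where several monomials receive contributions from more than one partition. Concretely, $x_2x_3y_2$ arises both from $(4,1)$ (through the $4x_3y_2$-summand inside $d^\circ_4$) and from $(3,1,1)$ (through the $3x_2y_2$-summand inside $d^\circ_3$), combining to $50x_2x_3y_2$; likewise $x_2^2y_2^2$ arises from $(4,1)$ (through the $12x_2y_2^2$-summand of $d^\circ_4$) and from $(3,2)$ (through the $3x_2y_2$-summand of $d^\circ_3$ times the factor $d^\circ_2=y_2$), combining to $90x_2^2y_2^2$. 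Keeping track of these overlaps, and double-checking each coefficient $\tfrac{n!}{n_1!\cdots n_m!\,|S(m,\lambda)|}$ against the value of $|S(m,\lambda)|$, is the part demanding care; everything else is routine substitution, and the corresponding statements for $d^\circ_n$ require no further work.
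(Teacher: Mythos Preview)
Your proposal is correct and follows essentially the same approach as the paper: a direct computation from the recursion displayed after Theorem~\ref{th:FreeProdOperad}, running over the partitions of $n$ with $m\ge 2$ parts, computing each coefficient $\tfrac{n!}{n_1!\cdots n_m!\,|S(m,\lambda)|}$, substituting the previously obtained $d^\circ_k$, and collecting terms. The paper's proof only spells out the $n=5$ step in detail (leaving $n=3,4$ implicit), whereas you sketch all three; your identification of the two ``overlap'' monomials $x_2x_3y_2$ and $x_2^2y_2^2$ at $n=5$ matches the paper's bookkeeping exactly. The only notational liberty you take --- writing $d^\bullet_1=d^\circ_1=1$ --- is harmless and simply makes explicit the convention the paper uses tacitly (a bare leaf may sit under a vertex of either colour).
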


\begin{proof}
Let us show how to derive $d^\bullet_5$ from $d_k^\circ$, $2<k<5$.
There are six partitions of $n= 5$:
\begin{gather*}
    (4,1);\;\;\;(3,2); \\
    (3,1,1);\;\;\;(2,2,1);\\
    (2,1,1,1);\;\;\;\;\;\;(1,1,1,1,1).
\end{gather*}

For $\lambda =(4,1)$, $S(2,\lambda )=\{e\}$, so
$|S(\lambda )| = 1\times 4!\times 1 = 24$.
The dimension of the corresponding subspace
is $5x_2d_4^\circ $.

For $\lambda =(3,2)$, $S(2,\lambda )=\{e\}$, so
$|S(\lambda )| = 1\times 3!\times 2! = 12$.
The dimension of the corresponding subspace
is $10x_2d_3^\circ d_2^\circ $.

For $\lambda =(3,1,1)$, $S(3,\lambda )\simeq S_2$,
so  $|S(\lambda )| = 2!\times 3!\times 1\times 1 = 12$.
The dimension of the corresponding subspace
is $10x_3d_3^\circ $.

For $\lambda =(2,2,1)$, $S(3,\lambda )\simeq S_2$,
so  $|S(\lambda )| = 2!\times 2!\times 2!\times 1 = 8$.
The dimension of the corresponding subspace
is $15x_3d_2^\circ d_2^\circ $.

For $\lambda =(2,1,1,1)$, $S(4,\lambda )\simeq S_3$,
so  $|S(\lambda )| = 3!\times 2!\times 1\times 1 \times 1= 12$.
The dimension of the corresponding subspace
is $10x_4d_2^\circ $.

For $\lambda =(1,1,1,1,1)$, $S(5,\lambda )\simeq S_5$, so  $|S(\lambda )| = 
5!\times 1 = 120$.
The dimension of the corresponding subspace
is $x_5$.

Hence,
\begin{multline*}
d_5^\bullet
    = 5x_2d_4^\circ
    + 10x_2d_3^\circ d_2^\circ
    + 10x_3d_3^\circ
    + 15x_3d_2^\circ d_2^\circ
    + 10x_4d_2^\circ
    + x_5 \\
=
 5x_2(y_4+6y_3x_2+3y_2x_2^2+4y_2x_3+12y_2^2x_2)
 + 10x_2(y_3+3x_2y_2)y_2 \\
 + 10x_3 (y_3+3x_2y_2)
 + 15x_3y_2^2
 + 10x_4y_2
 +x_5 = x_5 + 10x_4y_2\\
 + 5x_2y_4 +15x_3y_2^2 + 30x_2^2y_3 + 50x_2x_3y_2 +10 x_2y_2y_3 +90x_2^2y_2^2 
+15x_2^3y_2 +10x_3y_3.
\end{multline*}
\end{proof}
In particular,
\begin{equation}\label{eq:d5-dim}
\begin{aligned}
d_5 = {}& d_5^\bullet + d_5^\circ
=x_5+y_5 + 15(x_4y_2+x_2y_4) + 20x_3y_3 \\
&{}+60(x_2x_3y_2+x_2y_2y_3) + 45(x_3y_2^2+x_2^2y_3)
+180 x_2^2y_2^2 + 15 (x_2^3y_2+y_2^3x_2).
\end{aligned}
\end{equation}

Given the construction of the basis for free product of operads is a useful tool 
to find the Gr\"obner basis of algebras equipped with two binary operations. Let 
us show some examples.

\begin{example}
Let us apply Corollary \ref{Macmahon} to calculate the dimension of the 
two-associative algebra up to degree 5:
\begin{center}
\begin{tabular}{c|ccccc}
 $n$ & 1 & 2 & 3 & 4 & 5 \\
 \hline
 $\dim(\As\ast \As(n)) $ & 1 & 4 & 36 & 528 & 10800
\end{tabular}
\end{center}
The obtained result is the same as the dimension of two-associative operad 
given in \cite{Loday-Ronco}.  
\end{example}

\begin{example}\label{ComLie}
Suppose that $\mathcal O_1$ is the operad $\Com$-$\As$ and $\mathcal O_2$ is the 
operad $\Lie$ generated by operations $\nu=(\cdot\cdot\cdot)$ and 
$\mu=[\cdot,\cdot]$, respectively. By Corollary \ref{Macmahon}, $\dim(\Lie\ast 
\Com$-$\As(4))=67$. Note that the operad $\Pois$ (governing Poisson algebras) 
can be defined as a quotient
\begin{center}
$\Lie\ast \Com$-$\As/ (\gamma^2_{1,2} (\mu,1,\nu)-\gamma^2_{2,1} 
(\nu,\mu,1)-\gamma^2_{2,1} (\nu,\mu,1)^{(23)}).$
\end{center}

It is well-known that $\dim\Pois(n)=n!$. The Leibniz identity written in terms of Lie 
brackets and associative-commutative multiplication can be converted 
into shuffle polynomials as follows:
\begin{gather*}
    [a,bc]\to [a,b]c+[a,c]b\;\;\;\Leftrightarrow\;\;\;[a,bc]\to [a,b]c+[a,c]b,\\
    [b,ac]\to [b,a]c+[b,c]a\;\;\;\Leftrightarrow\;\;\;[ac,b]\to [a,b]c-a[b,c],\\
    [c,ab]\to [c,a]b+[c,b]a\;\;\;\Leftrightarrow\;\;\;[ab,c]\to [a,c]b+a[b,c].
\end{gather*}
It is easy to show that the given rewriting system forms Gr\"obner basis 
generated by Leibniz identity. For that, we have to check $3$ compositions:
\begin{enumerate}
    \item $[a,bc]$ and $[ac,b]$. Those are $[ac,bd]$ and $[ad,bc]$.
    \item $[a,bc]$ and $[ab,c]$. That is $[ab,cd]$.
\end{enumerate}
Let us check only one of them (the remaining two can be checked in the same way). 
On the one hand,\\
$[ac,bd]\rightarrow [ac,d]b+[ac,b]d\rightarrow 
([a,d]c)b+(a[c,d])b+([a,b]c)d-(a[b,c])d\rightarrow 
([a,d]b)c+(ab)[c,d]+([a,b]c)d-(a[b,c])d$;\\
on the other hand,\\
$[ac,bd]\rightarrow [a,bd]c-a[bd,c]\rightarrow 
([a,b]d)c+([a,d]b)c-a([b,c]d)+a(b[c,d])\rightarrow 
([a,b]c)d+([a,d]b)c-(a([b,c])d+(ab)[c,d]$.

In other words, it means that we should reduce basic trees of 
$\Lie\ast\Com$-$\As$ that contain compositions of the form
$\mathcal O_2(m)\otimes \mathcal T(n_1)^\circ \otimes
\dots \otimes \mathcal T(n_m)^\circ$, where at least one $n_i>1$. There are 
exactly $43$ terms like that. 
So $\dim\Pois(4)=67-43=24$ as expected.
\end{example}

\begin{example}\label{gd4}
Let $\mu$ and $\nu$ be the generators of  the operads $\Lie$  and $\Nov$, 
respectively,
where $\Nov $ corresponds to the class of Novikov algebras.
Let us apply Corollary \ref{Macmahon} to calculate the dimensions
of the free product of operads $\Lie$ and $\Nov$ up to degree 5:
\begin{center}
\begin{tabular}{c|ccccc}
 $n$ & 1 & 2 & 3 & 4 & 5 \\
 \hline
 $\dim(\Lie\ast \Nov(n)) $ & 1 & 3 & 20 & 216 & 3274
\end{tabular}
\end{center}
In a similar way to the previous example, the quotient
\begin{equation}\label{eq:gd1}
\Lie\ast \Nov/([a,b\circ c]-[c,b\circ a]+[b,a]\circ c-[b,c]\circ a-b\circ [a,c])
\end{equation}
represents the operad $\GD$ governing the class of Gelfand--Dorfman algebras 
\cite{KSO2019}.

Let us find the dimensions of $\GD(n)$ up to $n=4$ as in the previous example 
omitting all the notions of a shuffle operad.
For degrees $n=1,2$ there is nothing to do.
Denote the identity from \eqref{eq:gd1} by $gd(1)$, and start with the set of 
identities $S:=\{gd(1)\}$.
If
$$
b\circ [a,c]\to [a,b\circ c]-[c,b\circ a]+[b,a]\circ c-[b,c]\circ a,
$$
then, for degree $3$, the rewriting system $S$ reduces elements of the form 
$a\circ [b,c]$.
There are exactly three different relations like that, so
$$
\dim(\GD(3))=20-3=17.
$$
Generally, by using $gd(1)$ we reduce all elements of the form $a\circ[b,c]$,
where $a$, $b$ and $c$ some monomials of the $\GD$-operad.

For degree $4$, consider the composition of
$(a\circ b)\circ c-(a\circ c)\circ b=0$ and $b=[u,v]$. 
Consider
$$(a\circ [u,v])\circ c-(a\circ c)\circ [u,v] \pmod S,$$
where $u<c<v$ and $\pmod S$ reduces all monomials by the rule $gd(1)$.
In this way, we obtain the following relation in $\GD$:
\begin{multline*}
gd(2) = [u,(a\circ c)\circ v]\to [v,(a\circ c)\circ u]+[u,a\circ c]\circ v-[v,a\circ 
c]\circ u \\
+ [u,a\circ v]\circ c-[v,a\circ u]\circ c+([a,u]\circ v)\circ c-([a,v]\circ 
u)\circ c.
\end{multline*}
Let us add $gd(2)$ to $S$, so now $S=\{gd(1), gd(2)\}$.

Computing the composition of
$(a\circ b)\circ c-(b\circ a)\circ c-a\circ(b\circ c)+b\circ(a\circ c)=0$ and 
$b=[u,v]$ modulo $S$
we get
\begin{multline*}
gd(3):=(a\circ[u,v])\circ c-([u,v]\circ a)\circ c-a\circ([u,v]\circ 
c)+[u,v]\circ(a\circ c)=[u,a\circ v]\circ c- \\
[v,a\circ u]\circ c+([a,u]\circ v)\circ c-([a,v]\circ u)\circ c-([u,v]\circ 
a)\circ c-a\circ([u,v]\circ c)+[u,v]\circ(a\circ c).
\end{multline*}
Add $gd(3)$ to the set $S$.
The rewriting rule corresponding to $gd(3)$ is
\begin{multline*}
a\circ([u,v]\circ c)\to [u,a\circ v]\circ c-[v,a\circ u]\circ c+([a,u]\circ 
v)\circ c- \\
([a,v]\circ u)\circ c-([u,v]\circ a)\circ c+[u,v]\circ(a\circ c).
\end{multline*}

Reducing elements of the form $x_1\circ[x_2,x_3]$ by $gd(1)$ and $[u,(a\circ 
c)\circ v]$ by $gd(2)$ (for $u<c<v$), and $a\circ([u,v]\circ c)$ by $gd(3)$, we 
get
$$
\dim(\GD(4))=\Lie\ast \Nov(4)/(gd(1),gd(2),gd(3))=
$$
$$
216-56(by\;gd(1))-8(by\;gd(2))-12(by\;gd(3))=140.
$$
The result agrees with the computer computation in \cite{KS2020} given dimension 
of $\GD$-operad up to degree 5, which coincides with the obtained result.
\end{example}

Let $2$-$\Com$-$\As\<X\>$ be the free algebra equipped with two binary 
operations generated
by the set $X=\<x_1,x_2,\ldots\>$,
where both multiplications are associative and commutative.

\begin{theorem}\label{th:MacMahon}
The dimension of the degree $n$ component of $2$-$\Com$-$\As\<x_1\>$ is equal to
the number of series-parallel networks
(or MacMahon numbers \cite{Macmahon numbers})
 with $n$ unlabeled edges.
\end{theorem}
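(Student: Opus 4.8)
The plan is to pass from the operad governing $2$-$\Com$-$\As$ algebras to its free algebra on one generator, to exhibit a combinatorial basis of that algebra, and to identify this basis with the set of series-parallel networks.

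First I would use two ingredients. The variety of $2$-$\Com$-$\As$ algebras is governed by the free product $(\Com$-$\As)\ast(\Com$-$\As)$, i.e.\ by two commutative associative operations with no cross-relations. And, as is standard, the degree $n$ component of the free algebra on one generator $x_1$ over an operad $\mathcal P$ is the space of $S_n$-coinvariants of $\mathcal P(n)$: substituting $x_1$ into every argument is exactly quotienting $\mathcal P(n)$ by the permutation of its inputs. By Theorem~\ref{th:FreeProdOperad}, $(\Com$-$\As)\ast(\Com$-$\As)\cong\mathcal T(\Com$-$\As,\Com$-$\As)$. Since $\Com$-$\As(m)$ is one-dimensional with trivial $S_m$-action for every $m$, the inductive construction of $\mathcal T(n)$, built from tensor products and the induced modules $\mathrm{Ind}_{S(\lambda)}^{S_n}$, produces a linear basis of $\mathcal T(n)$ that $S_n$ permutes \emph{as a set} (with no signs). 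Concretely this basis is the commutative, non-planar counterpart of the tree basis of Example~\ref{exmp:Free-ns}: the abstract rooted trees with $n$ leaves labelled $1,\dots,n$, internal vertices coloured $\bullet$ or $\circ$, every internal vertex having at least two children, no edge connecting internal vertices of the same colour, and no further decoration (an $m$-ary vertex carries the unique generator of $\Com$-$\As(m)$). Hence the coinvariants $\mathcal T(n)_{S_n}$ have a basis indexed by the $S_n$-orbits of these labelled trees, that is, by the isomorphism classes of the same trees with the leaves left \emph{unlabelled}; call these the admissible two-coloured trees with $n$ leaves.

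Second I would invoke the classical canonical decomposition of a series-parallel network: a single edge is a leaf, a series composition of $\ge 2$ networks is a $\bullet$-vertex whose children are the trees of those networks, and a parallel composition is a $\circ$-vertex. Series composition and parallel composition of networks are each commutative and associative, which is why the trees are unordered and why, in canonical (fully flattened) form, no factor of a series composition is itself a series composition, and symmetrically with $\bullet$ and $\circ$ interchanged; this is exactly the condition that no edge be monochromatic. Since every series-parallel network has a unique such decomposition tree (classical, see \cite{Macmahon numbers}), this is a bijection between admissible two-coloured trees with $n$ leaves and series-parallel networks with $n$ unlabelled edges, and hence the dimension of the degree $n$ component of $2$-$\Com$-$\As\<x_1\>$ is the $n$-th MacMahon number.

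The only real obstacle is the first step: one must check that specialising to a single generator collapses exactly the $S_n$-orbits on the monomial basis of $\mathcal T(n)$, which rests on $\Com$-$\As$ being a permutation operad, so that the whole construction of $(\Com$-$\As)\ast(\Com$-$\As)$ stays within permutation $S_n$-modules carrying a canonical set basis; granting this, the remainder is either formal or classical. As a consistency check, the resulting numbers satisfy $m_1=1$ and $m_n=2b_n$ for $n\ge 2$, where $b_n$ is the number of multisets, of size $\ge 2$ and with $n$ leaves in total, whose members are leaves or $\circ$-rooted admissible trees; this reproduces $1,2,4,10,24,\dots$ and comes from $\mathcal T(n)$ only after passing to coinvariants, not from the operad dimension \eqref{eq:d5-dim} itself.
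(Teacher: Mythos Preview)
Your argument is correct and follows essentially the same route as the paper: identify a basis of $(\Com\text{-}\As)\ast(\Com\text{-}\As)(n)$ with labelled two-coloured alternating trees via Theorem~\ref{th:FreeProdOperad}, pass to unlabelled leaves to reach the one-generator free algebra, and then match these unlabelled trees bijectively with series-parallel networks by reading $\bullet$ and $\circ$ as the two composition types. The paper's proof is terser---it simply says ``consider the basis trees with unlabelled leaves'' and names the bijection---whereas you supply the justification the paper leaves implicit, namely that $\Com\text{-}\As(m)$ is the trivial $S_m$-module so the $S_n$-action on $\mathcal T(n)$ permutes the tree basis and hence coinvariants are counted by orbits; this is a genuine point and your treatment of it is the more careful one.
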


\begin{proof}
By Theorem \ref{th:FreeProdOperad}, the operad $\Com$-$\As\ast\Com$-$\As$ is 
isomorphic to the multilinear part of algebra $2$-$\Com\<X\>$.
 To find the desired dimensions we should consider the basis trees of 
$\Com$-$\As\ast\Com$-$\As$ with unlabeled leaves. There is a natural bijection 
between these basis trees and series-parallel networks. To construct a bijection 
between them, we identify the first associative-commutative multiplication 
$\bullet$ with
 the parallel network  connection, and second associative-commutative 
multiplication $\circ$---with the series connection.
\end{proof}

Let us state the first seven terms of MacMahon numbers \cite{Macmahon numbers}:
\begin{center}
\begin{tabular}{c|cccccccc}
 $n$ unlabeled edges & 1 & 2 & 3 & 4 & 5 & 6 & 7 & $\ldots$ \\
 \hline
series-parallel networks  & 1 & 2 & 4 & 10 & 24 & 66 & 180 & $\ldots$
\end{tabular}
\end{center}

\begin{example}
If $n=2, 3$, then

\begin{picture}(30,80)
\put(57,68){$\bullet$}
\put(60,70){\line(-1,-1){15}}
\put(60,70){\line(1,-1){15}}
\put(85,60){$\longrightarrow$}
\put(130,70){\line(1,0){25}}
\put(130,70){\line(0,-1){15}}
\put(130,55){\line(1,0){25}}
\put(155,70){\line(0,-1){15}}
\put(152,60){$\bullet$}
\put(127,60){$\bullet$}
\put(180,60){$;$}

\put(227,68){$\circ$}
\put(230,70){\line(-1,-1){15}}
\put(230,70){\line(1,-1){15}}
\put(255,60){$\longrightarrow$}
\put(285,63){\line(1,0){50}}
\put(283,60){$\bullet$}
\put(308,60){$\bullet$}
\put(333,60){$\bullet$}
\put(350,60){$;$}

\put(57,38){$\bullet$}
\put(60,40){\line(-1,-1){15}}
\put(60,40){\line(1,-1){15}}
\put(42,22){$\circ$}
\put(45,25){\line(-1,-1){15}}
\put(45,25){\line(1,-1){15}}
\put(83,25){$\longrightarrow$}
\put(130,35){\line(1,0){25}}
\put(130,35){\line(0,-1){15}}
\put(130,20){\line(1,0){25}}
\put(155,35){\line(0,-1){15}}
\put(152,25){$\bullet$}
\put(140,32){$\bullet$}
\put(127,25){$\bullet$}
\put(180,25){$;$}

\put(227,38){$\circ$}
\put(230,40){\line(-1,-1){15}}
\put(230,40){\line(1,-1){15}}
\put(212,22){$\bullet$}
\put(215,25){\line(-1,-1){15}}
\put(215,25){\line(1,-1){15}}
\put(255,25){$\longrightarrow$}
\put(290,35){\line(1,0){25}}
\put(290,35){\line(0,-1){15}}
\put(290,20){\line(1,0){25}}
\put(315,35){\line(0,-1){15}}
\put(315,28){\line(1,0){20}}
\put(312,25){$\bullet$}
\put(287,25){$\bullet$}
\put(332,25){$\bullet$}
\put(350,25){$;$}

\put(57,-2){$\bullet$}
\put(60,0){\line(-1,-1){15}}
\put(60,0){\line(1,-1){15}}
\put(60,0){\line(0,-1){15}}
\put(85,-10){$\longrightarrow$}
\put(130,0){\line(1,0){25}}
\put(130,-7){\line(1,0){25}}
\put(130,0){\line(0,-1){15}}
\put(130,-15){\line(1,0){25}}
\put(155,0){\line(0,-1){15}}
\put(152,-10){$\bullet$}
\put(127,-10){$\bullet$}
\put(180,-10){$;$}

\put(227,-2){$\circ$}
\put(230,00){\line(-1,-1){15}}
\put(230,00){\line(1,-1){15}}
\put(230,00){\line(0,-1){15}}
\put(255,-10){$\longrightarrow$}
\put(285,-7){\line(1,0){50}}
\put(283,-10){$\bullet$}
\put(308,-10){$\bullet$}
\put(333,-10){$\bullet$}
\put(350,-10){$;$}
\end{picture}
\end{example}
$ $

%
For every two operads $\mathcal{O}_1$ and $\mathcal{O}_2$ there is a natural mapping 
from monomials in $(\mathcal{O}_1\ast\mathcal{O}_2)(n)$ to the set of series-parallel networks with $n$ edges
as given above.
For every particular series-parallel network $B_i$, its pre-image span a subspace $\mathcal{N}_i$
in $(\mathcal{O}_1\ast \mathcal{O}_2)(n)$
The sum of all such subspaces form the entire space $(\mathcal{O}_1\ast \mathcal{O}_2)(n)$.
Then:
$$
(\mathcal{O}_1\ast\mathcal{O}_2)(n)=\mathcal{N}_1\oplus\mathcal{N}
_2\oplus\ldots\oplus\mathcal{N}_{k_n},
$$
where $k_n$ are the MacMahon numbers.

\section{Basis of Lie-admissible operad}

\begin{definition}
A Lie-admissible algebra over a field $\mathbb{K}$ of characteristic not equal to 2 is a 
vector space equipped with a binary operation satisfying the following 
identity:
\begin{multline}\label{Lie-admissible}
(ab)c=(ba)c+c(ab)-c(ba)-(bc)a+(cb)a+a(bc)-a(cb) \\
-(ca)b+(ac)b+b(ca)-b(ac).  
\end{multline}
\end{definition}

%
If $\mathcal{L}$ is a Lie-admissible algebra generated by a set 
$X=\{x_1,x_2,\ldots\}$ then $\mathcal{L}^{(-)}$ is a subalgebra of $\mathcal{L}$ 
generated by $X$ relative to the multiplication $[\cdot,\cdot]$, such that
$$[x_i,x_j]=x_ix_j-x_jx_i.$$
It is well-known that $\mathcal{L}^{(-)}$ is a Lie algebra and 
$\mathcal{L}^{(+)}$ is a commutative(non-associative) algebra, where 
$\mathcal{L}^{(+)}\<X\>$ is a subalgebra of $\mathcal{L}$ defined by 
multiplication $\{\cdot,\cdot\}$, such that:
$$
\{x_i,x_j\}=x_ix_j+x_jx_i.
$$

The operad $\Lie$-$\adm$ of the variety of Lie-admissible algebras
is generated by $W_2=W_2(2)$, where $\dim W_2(2)=2$.
The free operad $\mathcal F(W_2)$ is exactly the operad of magma algebras.
The set of defining relations
of $\Lie$-$\adm$ consists of the identity \eqref{Lie-admissible}:
\begin{multline}\label{operadLie-Admissible}
\gamma^2_{2,1} (\rho,\rho,1) - \gamma^2_{2,1} (\rho,\rho,1)^{(12)} - 
\gamma^2_{1,2} (\rho,1,\rho)^{(132)} + \gamma^2_{1,2} (\rho,1,\rho)^{(13)} \\
+\gamma^2_{2,1} (\rho,\rho,1)^{(123)} - \gamma^2_{2,1} (\rho,\rho,1)^{(13)} - 
\gamma^2_{1,2} (\rho,1,\rho) + \gamma^2_{1,2} (\rho,1,\rho)^{(23)}\\
+\gamma^2_{2,1} (\rho,\rho,1)^{(132)} - \gamma^2_{2,1} (\rho,\rho,1)^{(23)} - 
\gamma^2_{1,2} (\rho,1,\rho)^{(123)} + \gamma^2_{1,2} (\rho,1,\rho)^{(12)}
\end{multline}

\begin{definition}\label{shuffle operad}
A shuffle operad is a monoid in the category of nonsymmetric collections of 
vector spaces
 with respect to the shuffle composition product defined as follows:
$$
\mathcal{V}\circ_\Sha\mathcal{W}(n)=\bigoplus_{r\geq 
1}\mathcal{V}(r)\otimes\bigoplus_{\pi}\mathcal{W}(|I^{(1)}|)\otimes 
\mathcal{W}(|I^{(2)}|)\otimes\ldots\otimes\mathcal{W}(|I^{(r)}|),
$$
where $\mathcal{W}$ and $\mathcal{V}$ are nonsymmetric collections, $\pi$
ranges in all set partitions $\{1,\ldots,n\}$ $=\bigsqcup_{j=1}^r I^{(j)}$
for which all parts $I^{j}$ are nonempty and 
$\mathrm{min}(I_{1})<\ldots<\mathrm{min}(I_{r})$.
\end{definition}

\begin{lemma}\label{com*anti-com}
The operads $\Com$ and $\Com\ast\Anti$-$\Com$ may be presented as free shuffle 
operads
with operation alphabet $\mathcal{X}(2)=\{x\}$
and $\mathcal{X}(2)=\{x,y\}$,
respectively.
\end{lemma}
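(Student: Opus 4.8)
The plan is to exploit that $\Com$ and $\Anti\text{-}\Com$ are \emph{free} symmetric operads, so that, once we regard them as shuffle operads in the sense of \cite{Bremner-Dotsenko}, the only relations among tree monomials are the ones built into the shuffle normal form. I write $\mathcal M_1$ and $\mathcal M_2$ for the free shuffle operads with operation alphabets $\mathcal X(2)=\{x\}$ and $\mathcal X(2)=\{x,y\}$; recall that a linear basis of $\mathcal M_i(n)$ consists of the shuffle trees on $n$ leaves with internal vertices decorated by the chosen binary operations, where ``shuffle tree'' means that at every internal vertex the least leaf of the left branch is smaller than the least leaf of the right branch.

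First I would treat $\Com$. Being the free symmetric operad on one symmetric binary operation $\mu$, it carries no defining relation besides $\mu=\mu^{(12)}$; hence a basis of $\Com(n)$ is obtained from the commutative nonassociative monomials in $x_1,\dots,x_n$ (each variable used once) by retaining, at every internal node, the planar representative satisfying the shuffle condition above. These representatives are exactly the monomials of $\mathcal M_1(n)$. Since $\mu$ generates $\Com$, the shuffle-operad morphism $\mathcal M_1\to\Com$, $x\mapsto\mu$, is surjective, and by the previous sentence it carries the monomial basis of $\mathcal M_1(n)$ bijectively onto a basis of $\Com(n)$; so it is an isomorphism. (As a sanity check, $\dim\Com(n)=(2n-3)!!$ is the number of shuffle binary trees on $n$ leaves.) The identical argument applies to $\Anti\text{-}\Com$, the free symmetric operad on one skew-symmetric binary operation $\nu$: because the leaves of a tree are pairwise distinctly labelled, no monomial is fixed by swapping the children of an internal vertex, so the sign rule $\nu=-\nu^{(12)}$ annihilates nothing, and $\Anti\text{-}\Com$ is again the free shuffle operad on a single binary operation.

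For the free product I would observe that, by the universal property, $\Com\ast\Anti\text{-}\Com$ is the free symmetric operad on the arity-$2$ generator $V(2)\oplus W(2)$, where $V(2)$ is the trivial and $W(2)$ the sign $S_2$-module. Since $\operatorname{char}\mathbb{K}\neq2$, one has $V(2)\oplus W(2)\cong\mathbb{K}[S_2]$, and therefore $\Com\ast\Anti\text{-}\Com\cong\Magma$, the free operad on one (non-symmetric) binary operation $\rho$, with the two shuffle generators matching $\rho(x_1,x_2)$ and $\rho(x_2,x_1)$. Now a basis of $\Magma(n)$ is the set of all planar binary trees with leaves $1,\dots,n$, and the map sending such a tree to the shuffle tree obtained by swapping, at each internal vertex where the least left leaf exceeds the least right leaf, its two branches---and recording the swap by the decoration $y$ instead of $x$---is a bijection onto the monomial basis of $\mathcal M_2(n)$. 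Hence the surjection $\mathcal M_2\to\Com\ast\Anti\text{-}\Com$ determined by $x\mapsto\rho(x_1,x_2)$, $y\mapsto\rho(x_2,x_1)$ again matches bases and is an isomorphism. Alternatively, one may compare dimensions: $\dim(\Com\ast\Anti\text{-}\Com)(n)=2^{n-1}(2n-3)!!=\dim\mathcal M_2(n)$, the left-hand side being computable from the recursion in Corollary~\ref{Macmahon}.

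The one step requiring real care, and where I expect the work to lie, is the verification that the monomial bijections described above intertwine the composition maps: that grafting of commutative, resp. magmatic, monomials followed by renormalization into shuffle form corresponds to the shuffle composition $\circ_\Sha$ on $\mathcal M_1$, resp. $\mathcal M_2$. This is routine given that the shuffle composition is, by definition, the restriction of the symmetric composition to those leaf orderings that are shuffles---but it is the piece of bookkeeping one should not skip.
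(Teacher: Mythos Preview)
Your argument is correct and takes a genuinely different route from the paper. The paper proceeds by a bare-hands induction on the arity: it lists the possible shapes $\mu(k,\mu(A_1))$, $\mu(\mu(A_2),l)$, $\mu(\mu(A_3),\mu(A_4))$ of a commutative tree and shows in each case that the (anti-)commutativity relation lets one swap subtrees so that the branch containing the leaf~$1$ (more generally, the smallest leaf) sits on the left, thereby landing in the shuffle normal form; the $\Com\ast\Anti\text{-}\Com$ case is then declared analogous. Your approach instead invokes the structural fact that $\Com$, $\Anti\text{-}\Com$ and their free product are \emph{free} symmetric operads, together with the identification $V(2)\oplus W(2)\cong\mathbb{K}[S_2]$ (valid since $\operatorname{char}\mathbb{K}\neq 2$) to recognize $\Com\ast\Anti\text{-}\Com\cong\Magma$; you then appeal to the standard fact from \cite{Bremner-Dotsenko,Dotsenko-Khoroshkin} that the forgetful functor to shuffle operads sends a free symmetric operad to a free shuffle operad on the underlying nonsymmetric collection of generators. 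This is cleaner and explains \emph{why} the lemma holds (no relations to begin with, hence none after forgetting), whereas the paper's induction is more elementary and self-contained, not requiring the reader to know the general compatibility of the forgetful functor with free constructions. Your final caveat about checking that the bijection respects compositions is exactly the content of that general compatibility, so if you are willing to cite it the bookkeeping disappears; if not, you end up redoing essentially the paper's induction.
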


\begin{proof}
Firstly, let us prove this statement for operad $\Com$ by induction of a 
monomial length. Suppose that a generator of operad $\Com$ is $\mu$. For $n=2$, 
there are two monomials: $\mu(1,2)\leftrightarrow x(1 2)$, 
$\mu(2,1)=\mu(1,2)\leftrightarrow x(1 2)$.
Every monomial of length $n$ can be presented as: $\mu(k,\mu(A_1))$, 
$\mu(\mu(A_2),l)$ $\mu(\mu(A_3),\mu(A_4))$, where $1\leq k,l\leq n$ and $A_i$ 
are monomials of $\Com$ that length less that $n$. To prove this statement, we 
have to consider the following cases:
\begin{enumerate}
    \item For $\mu(k,\mu(A_1))$, if $k=1$, then $\mu(1,\mu(A_1))\in 
\mathcal{T}_{\Sha}(\mathcal{X})$ by the inductive hypothesis.
    \item For $\mu(k,\mu(A_1))$, if $k\neq 1$, then 
$\mu(k,\mu(A_1))=\mu(\mu(A_1),k)\in \mathcal{T}_{\Sha}(\mathcal{X})$ by the 
inductive hypothesis.
    \item For $\mu(\mu(A_2),l)$, if $l=1$, then 
$\mu(\mu(A_2),1)=\mu(1,\mu(A_2))\in \mathcal{T}_{\Sha}(\mathcal{X})$ by the 
inductive hypothesis.
    \item For $\mu(\mu(A_2),l)$, if $l\neq 1$, then $\mu(\mu(A_2),l)\in 
\mathcal{T}_{\Sha}(\mathcal{X})$ by the inductive hypothesis.
    \item For $\mu(\mu(A_3),\mu(A_4))$, if monomial $A_3$ contains index $1$, 
then $\mu(\mu(A_3),\mu(A_4))$ $\in \mathcal{T}_{\Sha}(\mathcal{X})$ by the 
inductive hypothesis.
    \item For $\mu(\mu(A_3),\mu(A_4))$, if monomial $A_4$ contains index $1$, 
then $\mu(\mu(A_3),\mu(A_4))$ $=\mu(\mu(A_4),\mu(A_3))\in 
\mathcal{T}_{\Sha}(\mathcal{X})$ by the inductive hypothesis.
\end{enumerate}
In the same way, this statement can be proved for the case 
$\Com\ast\Anti$-$\Com$.
\end{proof}

\begin{example}\label{shufflelie}
Let us consider the operad $\Lie$ with operation $\mu$.
 Using a forgetful functor $f$, we can convert this operad to the shuffle operad 
$\mathcal{T}_{\Sha}(\mathcal{X})$ as follows  \cite{Bremner-Dotsenko}:
$$
\mu(1\; 2)\longleftrightarrow x(1\; 2),\;\;\;\mu(2\; 1)\longleftrightarrow y(1\; 
2),
$$
for the operation alphabet $\mathcal{X}(2)=\{x,y\}$.
By anti-commutativity, $x(1\; 2)=-y(1\; 2)$.
For $\mathcal{T}_{\Sha}(\mathcal{X})$, the Jacobi identity can be written in the 
following form:
$$
x(x(1\; 2)\; 3)-x(1\; x(2\; 3))-x(x(1\; 3)\; 2).
$$
This element generates an ideal of relations defining the operad $\Lie^f$
as a quotient of $\mathcal{T}_{\Sha}(\mathcal{X})$.

To find the Gr\"obner basis of $\Lie^f$ we have to check only one composition of 
the Jacobi identity: $x(x(x(1\; 2)\; 3)\; 4)$,
where the leading monomial is $x(x(1\; 2)\; 3)$. On the one hand,\\
$x(x(x(1\; 2)\; 3)\; 4)\rightarrow x(x(x(1\; 2)\; 4)\; 3)+x(x(1\; 2)\; x(3\; 
4))\rightarrow x(x(x(1\; 4)\; 2)\; 3)+x(x(1\; x(2\; 4))\; 3)+x(x(1\; x(3\; 4))\; 
2)+x(1\; x(2\; x(3\; 4)))\rightarrow
x(x(x(1\; 4)\; 3)\; 2)+x(x(1\; x(3\; 4))\; 2)+x(x(1\; 3)\; x(2\; 4))+x(x(1\; 
4)\; x(2\; 3))+x(1\; x(x(2\; 4)\; 3))+x(1\; x(2\; x(3\; 4)))$,\\
on the other hand,\\
$x(x(x(1\; 2)\; 3)\; 4)\rightarrow x(x(x(1\; 3)\; 2)\; 4)+x(x(1\; x(2\; 3))\; 
4)\rightarrow x(x(x(1\; 3)\; 4)\; 2)+x(x(1\; 3)\; x(2\; 4))+x(x(1\; 4)\; x(2\; 
3))+x(1\; x(x(2\; 3)\; 4))\rightarrow x(x(x(1\; 4)\; 3)\; 2)+x(x(1\; x(3\; 4))\; 
2)+x(x(1\; 3)\; x(2\; 4))+x(x(1\; 4)\; x(2\; 3))+x(1\; x(x(2\; 4)\; 3))+x(1\; 
x(2\; x(3\; 4))).$

The final expressions coincide, so the single defining relation of the operad 
$\Lie^f$ forms a Gr\"obner basis.
\end{example}

Let us apply the forgetful functor $f$ to the operad $\Lie$-$\adm$ and find it's 
Gr\"obner basis. Choose the operation alphabet $\mathcal{X}$ with
 $\mathcal{X}(2)=\{x,y\}$ to convert the operad $\Lie$-$\adm$ to shuffle operad 
$\mathcal{T}_{\Sha}(\mathcal{X})$ as in Example \ref{shufflelie}:
$$x(1\;2)\longleftrightarrow \rho(1\;2),\quad y(1\;2)\longleftrightarrow 
\rho(2\;1).$$
Converting the identity (\ref{operadLie-Admissible}) of the operad $\Lie$-$\adm$
into an element of $\mathcal{T}_\Sha(\mathcal{X})$, we obtain the following:
\begin{multline}\label{shufflelieadm}
x(x(1\; 2)\; 3) - x(y(1\; 2)\; 3) - y(x(1\; 2)\; 3) + y(y(1\; 2)\; 3) + y(1\; 
x(2\; 3)) - y(1\; y(2\; 3)) \\
- x(1\; x(2\; 3)) + x(1\; y(2\; 3)) + x(y(1\; 3)\; 2) - x(x(1\; 3)\; 2) - 
y(y(1\; 3)\; 2) + y(x(1\; 3)\; 2)
\end{multline}
This element generates the ideal of relations defining the
 quotient of $\mathcal{T}_{\Sha}(\mathcal{X})$ isomorphic to the operad 
$\Lie$-$\adm^f$.

Now, we are ready to find a Gr\"obner basis generated by relation 
(\ref{operadLie-Admissible}).

\begin{lemma}\label{BG-lie-adm}
The defining relation (\ref{shufflelieadm}) of the operad $\Lie$-$\adm^f$ is a 
Gr\"obner basis.
\end{lemma}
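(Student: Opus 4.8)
The plan is to verify that the single relation \eqref{shufflelieadm} is a Gr\"obner basis by the operadic analogue of Buchberger's criterion for shuffle operads (\cite{Bremner-Dotsenko}, \cite{Dotsenko-Khoroshkin}): one fixes a monomial order, passes to the rewriting rule induced by the leading monomial, and checks that every nontrivial self-composition of the relation reduces to zero. I would keep the order on tree monomials of $\mathcal{T}_{\Sha}(\mathcal{X})$ used in Example~\ref{shufflelie}, under which the leading term of \eqref{shufflelieadm} is the height-two left comb $x(x(1\;2)\;3)$; the corresponding rule rewrites $x(x(1\;2)\;3)$ as the signed sum of the remaining eleven monomials occurring in \eqref{shufflelieadm}, each of which is strictly smaller.

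Next I would locate the ambiguities. The leading monomial is a left comb with two internal vertices, both labelled $x$. Two copies of it can embed into a common tree monomial so that their vertex sets overlap only when they share the middle vertex, and since the trees are planar this forces the common multiple to be the arity-four height-three left comb $x(x(x(1\;2)\;3)\;4)$ --- exactly the composition checked for $\Lie^f$ in Example~\ref{shufflelie}. The label $y$ never appears in the leading monomial, hence contributes no further overlaps, and since the relation is quadratic no ambiguity of larger arity occurs. So there is a single composition to resolve.

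It then remains to reduce $x(x(x(1\;2)\;3)\;4)$ to its normal form in the two natural ways --- first rewriting the inner subtree $x(x(1\;2)\;3)$, and first rewriting the outer occurrence of the leading monomial --- iterating the rule in each case until no monomial divisible by the leading monomial $x(x(1\;2)\;3)$ remains, and then comparing the two resulting linear combinations of reduced tree monomials. This is a direct expansion in arity four, entirely parallel to the two-sided computation displayed in Example~\ref{shufflelie}, the only difference being that each rewrite now produces eleven terms and both operation labels $x$ and $y$ are present. I expect the one genuine obstacle to be purely the bookkeeping: tracking signs and keeping account of which monomials are still reducible across the iterated rewriting. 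Once the two expansions are seen to coincide, the Diamond Lemma for shuffle operads gives that \eqref{shufflelieadm} is a Gr\"obner basis, as claimed.
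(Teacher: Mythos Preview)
Your proposal is correct and matches the paper's proof exactly: the paper identifies the leading monomial $x(x(1\;2)\;3)$, observes that the unique self-overlap is $x(x(x(1\;2)\;3)\;4)$, and then carries out precisely the two-sided reduction you describe, displaying the resulting expression $S$ explicitly and checking that both rewritings terminate at it. The only thing the paper adds beyond your outline is the brute-force expansion itself.
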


\begin{proof}
The leading monomial of (\ref{shufflelieadm}) is $x(x(1\; 2)\; 3)$.
There is only one composition that we have to check: $x(x(x(1\; 2)\; 3)\; 4)$.
On the one hand,
\eqref{shufflelieadm} implies
\begin{eqnarray*}
x(x(x(1\; 2)\; 3)\; 4)\longrightarrow x(x(1\; 2)\; x(3\; 4)) - x(x(1\; 2)\; 
y(3\; 4)) + x(x(x(1\; 2)\; 4)\; 3)\\
- x(y(x(1\; 2)\; 4)\; 3) + x(y(x(1\; 2)\; 3)\; 4) - y(x(1\; 2)\; x(3\; 4)) + 
y(x(1\; 2)\; y(3\; 4))\\
-y(x(x(1\; 2)\; 4)\; 3) + y(x(x(1\; 2)\; 3)\; 4) +
 y(y(x(1\; 2)\; 4)\; 3) - y(y(x(1\; 2)\; 3)\; 4) \\
\longrightarrow x(1\; x(2\; x(3\; 4))) - x(1\; x(2\; y(3\; 4))) + x(1\; x(x(2\; 
4)\; 3)) -
 x(1\; x(y(2\; 4)\; 3))\\
  - x(1\; y(2\; x(3\; 4))) + x(1\; y(2\; y(3\; 4))) -
 x(1\; y(x(2\; 4)\; 3)) + x(1\; y(y(2\; 4)\; 3))\\
+ x(x(1\; 4)\; x(2\; 3)) -
 x(x(1\; 4)\; y(2\; 3)) + x(x(1\; 3)\; x(2\; 4)) - x(x(1\; 3)\; y(2\; 4)) \\
+ x(x(1\; x(3\; 4))\; 2) - x(x(1\; y(3\; 4))\; 2) + x(x(x(1\; 4)\; 3)\; 2) -
 x(x(y(1\; 4)\; 3)\; 2) \\+ x(x(y(1\; 2)\; 4)\; 3) - x(y(1\; 4)\; x(2\; 3)) +
 x(y(1\; 4)\; y(2\; 3)) + x(y(1\; 2)\; x(3\; 4))\\
 -x(y(1\; 2)\; y(3\; 4)) -
 x(y(1\; 3)\; x(2\; 4)) + x(y(1\; 3)\; y(2\; 4)) - x(y(1\; x(3\; 4))\; 2) \\
+ x(y(1\; y(3\; 4))\; 2) - x(y(x(1\; 4)\; 3)\; 2) + x(y(x(1\; 2)\; 3)\; 4) +  
x(y(y(1\; 4)\; 3)\; 2)\\
 - x(y(y(1\; 2)\; 4)\; 3) - y(1\; x(2\; x(3\; 4))) +
 y(1\; x(2\; y(3\; 4))) - y(1\; x(x(2\; 4)\; 3)) \\
 + y(1\; x(y(2\; 4)\; 3)) +
 y(1\; y(2\; x(3\; 4))) - y(1\; y(2\; y(3\; 4))) + y(1\; y(x(2\; 4)\; 3)) \\
- y(1\; y(y(2\; 4)\; 3)) - y(x(1\; 4)\; x(2\; 3)) + y(x(1\; 4)\; y(2\; 3)) -
 y(x(1\; 3)\; x(2\; 4)) \\
 + y(x(1\; 3)\; y(2\; 4)) + y(x(1\; x(2\; 3))\; 4) -
 y(x(1\; x(3\; 4))\; 2) - y(x(1\; y(2\; 3))\; 4) \\
 + y(x(1\; y(3\; 4))\; 2) -
 y(x(x(1\; 4)\; 3)\; 2) + y(x(x(1\; 3)\; 2)\; 4) + y(x(y(1\; 4)\; 3)\; 2)  \\
- y(x(y(1\; 2)\; 4)\; 3) + y(x(y(1\; 2)\; 3)\; 4) - y(x(y(1\; 3)\; 2)\; 4) +
 y(y(1\; 4)\; x(2\; 3)) \\- y(y(1\; 4)\; y(2\; 3)) - y(y(1\; 2)\; x(3\; 4)) +
 y(y(1\; 2)\; y(3\; 4)) + y(y(1\; 3)\; x(2\; 4))\\ - y(y(1\; 3)\; y(2\; 4)) -
 y(y(1\; x(2\; 3))\; 4) + y(y(1\; x(3\; 4))\; 2) + y(y(1\; y(2\; 3))\; 4) \\
- y(y(1\; y(3\; 4))\; 2) + y(y(x(1\; 4)\; 3)\; 2) - y(y(x(1\; 3)\; 2)\; 4) -
 y(y(y(1\; 4)\; 3)\; 2) \\ + y(y(y(1\; 2)\; 4)\; 3) - y(y(y(1\; 2)\; 3)\; 4) +
 y(y(y(1\; 3)\; 2)\; 4)=S.
\end{eqnarray*}
On the other hand,
\begin{eqnarray*}
x(x(x(1\; 2)\; 3)\; 4)\rightarrow x(x(1\; x(2\; 3))\; 4) - x(x(1\; y(2\; 3))\; 
4) + x(x(x(1\; 3)\; 2)\; 4)\\ + x(x(y(1\; 2)\; 3)\; 4) - x(x(y(1\; 3)\; 2)\; 4) 
- x(y(1\; x(2\; 3))\; 4) +
x(y(1\; y(2\; 3))\; 4)\\
+ x(y(x(1\; 2)\; 3)\; 4) - x(y(x(1\; 3)\; 2)\; 4) -
x(y(y(1\; 2)\; 3)\; 4) + x(y(y(1\; 3)\; 2)\; 4)\rightarrow S.
\end{eqnarray*}
\end{proof}

\begin{theorem}
The operad $\Lie$-$\adm$ is isomorphic to the free product $\Lie\ast\Com$.
\end{theorem}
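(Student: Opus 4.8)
The plan is to exhibit an explicit morphism $\Phi\colon\Lie\ast\Com\to\Lie$-$\adm$ via the universal property and then prove it is an isomorphism by passing to the associated shuffle operads, where a change of basis in arity~$2$ turns the two defining relations into literally the same relation.

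First I would define $\Phi$ on generators: send the skew-symmetric generator $\mu$ of the $\Lie$ factor to the commutator $\rho(1\,2)-\rho(2\,1)$ and the symmetric generator $\nu$ of the $\Com$ factor to the anticommutator $\rho(1\,2)+\rho(2\,1)$. By the universal property of the free product it is enough to check that these assignments define operad morphisms $\Lie\to\Lie$-$\adm$ and $\Com\to\Lie$-$\adm$. For the $\Com$ factor there is nothing to verify beyond the obvious symmetry of $\rho(1\,2)+\rho(2\,1)$, since $\Com$ is the free commutative operad (no relations). For the $\Lie$ factor one observes that the defining identity~\eqref{Lie-admissible} is, term by term, exactly the Jacobi identity written out for the commutator; hence in $\Lie$-$\adm$ the commutator automatically satisfies Jacobi and $\mu\mapsto\rho(1\,2)-\rho(2\,1)$ is well defined. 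Since $\mathrm{char}\,\mathbb{K}\neq 2$ we have $\rho=\tfrac12\bigl((\rho(1\,2)-\rho(2\,1))+(\rho(1\,2)+\rho(2\,1))\bigr)$, so $\Phi$ is surjective.

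For injectivity I would apply the forgetful functor $f$ and compare $\Lie$-$\adm^f$ with $(\Lie\ast\Com)^f$. The functor $f$ sends a presentation by generators and relations to a presentation of the same shape and sends free products to free products of shuffle operads; combined with Lemma~\ref{com*anti-com} (so that $\Com^f$ is the free shuffle operad on a single binary operation, with no relations) and Example~\ref{shufflelie} (so that $\Lie^f$ is the free shuffle operad on a single binary operation modulo the Jacobi relation, which is a Gr\"obner basis), this gives
\[
(\Lie\ast\Com)^f\;\cong\;\bigl\langle\,\{\mathbf{x},\mathbf{y}\}\ \bigm|\ \mathbf{x}(\mathbf{x}(1\,2)\,3)-\mathbf{x}(1\,\mathbf{x}(2\,3))-\mathbf{x}(\mathbf{x}(1\,3)\,2)\,\bigr\rangle,
\]
where $\mathbf{x}$ comes from the $\Lie$ factor and $\mathbf{y}$ from the $\Com$ factor. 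On the other hand $\Lie$-$\adm^f$ is the free shuffle operad on $\{x,y\}$ (with $x\leftrightarrow\rho(1\,2)$, $y\leftrightarrow\rho(2\,1)$) modulo the relation~\eqref{shufflelieadm}. The central computation is to rewrite \eqref{shufflelieadm} in the new arity-$2$ basis $\mathbf{x}:=x-y$, $\mathbf{y}:=x+y$ (a genuine change of basis as $\mathrm{char}\,\mathbb{K}\neq 2$): expanding $\mathbf{x}(\mathbf{x}(1\,2)\,3)$, $\mathbf{x}(1\,\mathbf{x}(2\,3))$ and $\mathbf{x}(\mathbf{x}(1\,3)\,2)$ one finds that \eqref{shufflelieadm} is precisely $\mathbf{x}(\mathbf{x}(1\,2)\,3)-\mathbf{x}(1\,\mathbf{x}(2\,3))-\mathbf{x}(\mathbf{x}(1\,3)\,2)$. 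Thus in the basis $\{\mathbf{x},\mathbf{y}\}$ the two presentations coincide, so $\Lie$-$\adm^f\cong(\Lie\ast\Com)^f$; Lemma~\ref{BG-lie-adm} and Corollary~\ref{Macmahon} then also identify the common dimensions $\dim\Lie$-$\adm(n)=\dim(\Lie\ast\Com)(n)$ for all $n$.

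Finally I would descend back to symmetric operads: the forgetful functor is faithful and reflects isomorphisms (a morphism of symmetric operads that is bijective in every arity is $S_n$-equivariant, hence invertible), and by the choice of generators the isomorphism $\Lie$-$\adm^f\cong(\Lie\ast\Com)^f$ just produced is exactly $\Phi^f$. Hence $\Phi$ is an isomorphism $\Lie\ast\Com\to\Lie$-$\adm$. I expect the two real points of friction to be: (i) verifying carefully that $f$ commutes with forming free products and with passing to presentations, so that $(\Lie\ast\Com)^f$ genuinely has the displayed two-operation presentation; and (ii) performing the expansion of \eqref{shufflelieadm} in the basis $\{\mathbf{x},\mathbf{y}\}$ without sign or shuffle-labelling slips --- that last identity is really the whole content of the theorem, everything else being formal.
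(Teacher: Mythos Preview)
Your argument is correct and follows the same overall architecture as the paper's proof: define $\Phi$ via the universal property, check surjectivity by splitting $\rho$ into its symmetric and antisymmetric parts, and establish injectivity by passing to shuffle operads. The difference lies in how injectivity is obtained.

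The paper argues by \emph{dimension count through normal forms}: it uses Lemma~\ref{BG-lie-adm} to conclude that the Gr\"obner basis of $\Lie$-$\adm^f$ consists of the single relation~\eqref{shufflelieadm} with leading monomial $x(x(1\,2)\,3)$, and Example~\ref{shufflelie} to conclude that the Gr\"obner basis of $\Lie^f$ (hence of $(\Lie\ast\Com)^f$) has the same leading monomial. Since the two Gr\"obner bases have the same set of leading terms, the reduced trees coincide, so the dimensions agree, and the surjection $\Phi$ must be injective.

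Your route is a \emph{direct identification of presentations}: the linear change of generators $\mathbf{x}=x-y$, $\mathbf{y}=x+y$ turns~\eqref{shufflelieadm} into the shuffle Jacobi relation on the nose, so the two shuffle operads are visibly isomorphic and $\Phi^f$ realises this isomorphism. This is cleaner: it bypasses the lengthy composition check in Lemma~\ref{BG-lie-adm} entirely (that lemma is then only needed if one wants an explicit monomial basis or the dimension table, not for the isomorphism itself). The paper's approach, on the other hand, has the advantage of yielding the normal forms and dimensions as a by-product. Your points of friction are well identified; point~(i) is handled by the standard fact that the forgetful functor sends the free symmetric operad on $V$ to the free shuffle operad on the same collection and is exact, so presentations are preserved, and point~(ii) is a short computation that does go through as you claim.
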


\begin{proof}
%
%
If the operations of the anti-commutative ($\Anti$-$\Com$) and commutative \; 
($\Com$) operads are denoted by $x$ and $y$, respectively, then by Lemma 
\ref{com*anti-com},
$\Com\ast\Anti$-$\Com$ corresponds to  all monomials in the free shuffle operad 
with operation alphabet
 $\mathcal{X}(2)=\{x,y\}$.
 So
\begin{center}
    $\dim(\Com\ast\Anti$-$\Com)=\dim(\mathcal{T}_{\Sha}(\mathcal{X})).$
\end{center}
By Lemma \ref{BG-lie-adm}, we can reduce all trees in
$\mathcal{T}_{\Sha}(\mathcal{X})$
that contain a shuffle subtree of the form $x(x(1\; 2)\; 3)$.

By Example \ref{shufflelie}, the basis elements of the shuffle operad $\Lie^f$
with operation alphabet $\mathcal{X}(2)=\{x\}$
are those shuffle trees that contain no shuffle subtrees of the form
$x(x(1\; 2)\; 3)$.

Hence, the reduced form of a tree in $\Com\ast \Lie$ is the same as in 
$\Lie$-$\adm$,
and
\begin{center}
    $\dim(\Com\ast\Lie)=\dim(\Lie$-$\adm).$
\end{center}

Finally, by the definition of the free product
there is a morphism of operads $\Lie\ast\Com\to \Lie$-$\adm$
sending the generators of $\Lie$ and $\Com$ to $x(1\; 2)-y(1\;2)$ and 
$x(1\;2)+y(1\;2)$, respectively.
The morphism is surjective since every monomial in the free Lie-admissible algebra can be written as a 
 sum of monomials obtained from an associative word by applying $[\cdot,\cdot]$ and $\{\cdot,\cdot\}$
via $$ab=\frac{\{a,b\}+[a,b]}{2}$$
and $$ba=\frac{\{b,a\}+[b,a]}{2}.$$
The morphism is injective since the dimension of $\Com\ast\Lie$ is the same as the dimension of $\Lie$-$\adm$. 

Graphically it can be presented as follows:

\begin{picture}(30,80)
\put(190,60){\line(1,0){45}}
\put(230,57){$\rightarrow$}
\put(135,44){$\bcong$}
\put(190,35){\line(1,0){45}}
\put(230,32){$\rightarrow$}
\put(260,44){$\bcong$}
\put(98,57){$\Com\ast\Anti$-$\Com$}
\put(118,31){Shuffle($x,y$)}
\put(248,57){$\Com\ast\Lie$}
\put(248,31){$\Lie$-$\adm$}
\put(210,37){$g$}
\put(210,62){$f$}
\end{picture}
\vspace*{-\baselineskip}

\noindent where kernel of functions $f$ and $g$ are ideals generated by Jacoby 
identity and ($\ref{shufflelieadm}$), respectively.
\end{proof}

Using the calculation method of dimension $\Lie\ast\Com$, there can be obtained 
the dimension of Lie-admissible operad:

\begin{center}
\begin{tabular}{c|cccccccc}
 $n$ & 1 & 2 & 3 & 4 & 5 & 6 & 7 & \ldots \\
 \hline
 $\dim(\Lie\ast \Com(n)) $ & 1 & 2 & 11 & 101 & 1299 & 21484 & 434314 & \ldots
\end{tabular}
\end{center}

$ $

\textbf{Acknowledgments.} The author also expresses his gratitude to P. S. Kolesnikov, whose remarks helped to improve this paper.

\newpage

\end{document}